\DeclareMathOperator{\trace}{tr} 
\newcommand{\RR}{\mathbb{R}}
\newcommand{\R}{{\mathbb R}}
\DeclareMathOperator{\Ric}{Ric} 
\newcommand{\sk}{\mathbf{S_\kappa}} 
\newcommand{\ck}{\mathbf{C_\kappa}} 
\newcommand{\tk}{\mathbf{T}_\kappa}
 \newcommand{\E}{\mathrm e}
\newtheorem{theorem}{Theorem}
\newtheorem{proposition}[theorem]{Proposition}
\begin{document}

\title[eigenvalues and modulus of continuity for parabolic equations]{Sharp modulus of continuity for parabolic equations on manifolds and lower bounds for the first eigenvalue} 

\author{Ben Andrews} 
\address{Mathematical Sciences Institute, Australia National University; Mathematical Sciences Center, Tsinghua University; and Morningside Center for Mathematics, Chinese Academy of Sciences.}
\email{Ben.Andrews@anu.edu.au}
\thanks{Supported by Discovery Projects grants DP0985802 and DP120102462 of the Australian Research Council.}
\author{Julie Clutterbuck} 
\address{Mathematical Sciences Institute, Australia National University}
\email{Julie.Clutterbuck@anu.edu.au}
\address{}

\date{\today}  
 
\begin{abstract}   We derive sharp estimates on modulus of continuity for solutions of the heat equation on a compact Riemannian manifold with a Ricci curvature bound, in terms of initial oscillation and elapsed time.   As an application, we give an easy proof of the optimal lower bound on the first eigenvalue of the Laplacian on such a manifold as a function of diameter.
\end{abstract}
 
\maketitle 
 
\section{Introductory comments}

In our previous papers \cites{AC1,AC2} we proved sharp bounds on the modulus of continuity of solutions of various parabolic boundary value problems on domains in Euclidean space.  In this paper, our aim is to extend these estimates to parabolic equations on manifolds.  Precisely, let $(M,g)$ be a compact Riemannian manifold with induced distance function $d$, diameter $\sup\{d(x,y):\ x,y\in M\}=D$ and lower Ricci curvature bound $\text{\rm Ric}(v,v)\geq (n-1)\kappa g(v,v)$.  Let $a:\ T^*M\to\text{\rm Sym}_2\left(T^*M\right)$ be a parallel equivariant map (so that $a(S^*\omega)(S^*\mu,S^*\nu)=a(\omega)(\mu,\nu)$ for any $\omega$, $\mu$, $\nu$ in $T_x^*M$ and $S\in O(T_xM)$, while $\nabla\left(a(\omega)(\mu,\nu)\right)=0$ whenever $\nabla\omega=\nabla\mu=\nabla\nu=0$).   Then we consider solutions to the parabolic equation
\begin{align}\label{eq:flow}
\dfrac{\partial u}{\partial t} &= a^{ij}(Du)\nabla_i\nabla_ju. 
\end{align} 
Our assumptions imply that the coefficients $a^{ij}$ have the form
\begin{equation}\label{eq:formofa}
a(Du)(\xi,\xi) =\alpha(|Du|)\frac{\left(Du\cdot\xi\right)^2}{|Du|^2} + \beta(|Du|)\left(|\xi|^2-\frac{\left(Du\cdot\xi\right)^2}{|Du|^2}\right)
\end{equation}
for some smooth positive functions $\alpha$ and $\beta$.  Of particular interest are the cases of the heat equation (with $\alpha=\beta=1$) and the $p$-laplacian heat flows (with $\alpha=(p-1)|Du|^{p-2}$ and $\beta = |Du|^{p-2}$).
Here we are principally concerned with the case of manifolds without boundary, but can also allow $M$ to have a nontrivial convex boundary (in which case we impose Neumann boundary conditions $D_\nu u=0$).  Our main aim is to provide the following estimates on the modulus of continuity of solutions in terms of the initial oscillation, elapsed time, $\kappa$ and $D$:

\begin{theorem}[Modulus of continuity estimate]\label{thm:moc}
Let  $(M,g)$ be a compact Riemannian manifold (possibly with smooth, uniformly locally convex boundary) with diameter $D$ and Ricci curvature bound $\Ric\ge (n-1)\kappa g$ for some constant $\kappa\in\RR$.   
 Let $u: M\times [0,T)\rightarrow \R$ be a smooth solution to equation \eqref{eq:flow}, with Neumann boundary conditions if $\partial M\neq\emptyset$.
Suppose that  
\begin{itemize}
\item $u(\cdot,0)$ has a smooth modulus of continuity $\varphi_0:[0,D/2]\rightarrow\R$ with $\varphi_0(0)=0$ and $\varphi_0'\geq 0$;
\item $\varphi:[0,D/2]\times \RR_+\rightarrow \RR$ satisfies
\begin{enumerate}[label=\rm{(\roman*)}]
\item $\varphi(z,0)=\varphi_0(z)$ for each $z\in[0,D/2]$;
\item\label{1deqn} $\frac{\partial \varphi}{\partial t}\ge \alpha(\varphi')\varphi'' - (n-1) \tk \beta(\varphi')\varphi'$;
\item $\varphi'\geq 0$ on $[0,D/2]\times \RR_+$.
\end{enumerate}
\end{itemize}

Then $\varphi(\cdot,t)$ is a modulus of continuity for $u(\cdot,t)$ for each $t\in[0,T)$:
$$|u(x,t)-u(y,t)|\le 2\varphi\left( \frac{d(x,y)}2,t\right).$$  
\end{theorem}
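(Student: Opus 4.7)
The plan is a parabolic maximum-principle argument on the doubled space $M\times M\times[0,T)$ applied to the two-point function
$$
Z(x,y,t):=u(y,t)-u(x,t)-2\varphi\!\left(\tfrac{d(x,y)}{2},t\right),
$$
combined with the standard parabolic perturbation $\varphi_\varepsilon:=\varphi+\varepsilon(1+t)$ and $Z_\varepsilon:=u(y,t)-u(x,t)-2\varphi_\varepsilon(d/2,t)$ for small $\varepsilon>0$. This perturbation makes (ii) strict and renders $Z_\varepsilon<0$ at $t=0$; it then suffices to prove $Z_\varepsilon\le 0$ and send $\varepsilon\to 0$. Suppose otherwise: there is a first time $t_0>0$ and a pair $(x_0,y_0)$ with $Z_\varepsilon(x_0,y_0,t_0)=0$, and $\varphi_\varepsilon(0,t_0)\ge\varepsilon>0$ forces $x_0\neq y_0$.

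Fix a unit-speed minimizing geodesic $\gamma:[0,d_0]\to M$ from $x_0$ to $y_0$, where $d_0:=d(x_0,y_0)$. The first-order conditions $\nabla_x Z_\varepsilon=\nabla_y Z_\varepsilon=0$, together with $\varphi'\ge 0$, pin down $Du(x_0)=\varphi_\varepsilon'\,\gamma'(0)$ and $Du(y_0)=\varphi_\varepsilon'\,\gamma'(d_0)$, so $|Du|$ equals $\varphi_\varepsilon'$ at both endpoints and $Du/|Du|$ is parallel along $\gamma$. If $y_0$ is in the cut locus of $x_0$, the distance function is not smooth there, and I would replace $d$ locally by a smooth upper support function built from the arc length of a smooth family of curves through $\gamma$; this support function agrees with $d$ at $(x_0,y_0)$ and admits every second-variation bound used below.

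The heart of the argument is the second-order condition $D^2_{(x,y)}Z_\varepsilon\le 0$. Extend $E_1:=\gamma'$ to a parallel orthonormal frame $\{E_1,\dots,E_n\}$ along $\gamma$. Testing against $(V,W)=(-E_1(x_0),E_1(y_0))$ yields the tangential bound
$$
\nabla^2 u(y_0)(E_1,E_1)-\nabla^2 u(x_0)(E_1,E_1)\le 2\varphi_\varepsilon'',
$$
while testing against $(V,W)=(E_i(x_0),E_i(y_0))$ for $i\ge 2$, the $\varphi''$ contribution vanishes and $\nabla^2 d$ is controlled from above via the second variation of arc length applied to the variation field $J_i(\ell)=f(\ell)E_i(\gamma(\ell))$ with $f(0)=f(d_0)=1$. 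Choosing $f$ to solve $f''+\kappa f=0$ (the Euler--Lagrange equation for the index form) and summing over $i\ge 2$, the Ricci lower bound converts the curvature term into $(n-1)\kappa f^2$ and yields
$$
\sum_{i\ge 2}\bigl[\nabla^2 u(y_0)(E_i,E_i)-\nabla^2 u(x_0)(E_i,E_i)\bigr]\le -2(n-1)\tk(d_0/2)\,\varphi_\varepsilon'.
$$
The form \eqref{eq:formofa} of $a^{ij}$, combined with $Du/|Du|=E_1$ at both endpoints, packages these into
$$
\partial_t u(y_0)-\partial_t u(x_0)\le 2\alpha(\varphi_\varepsilon')\varphi_\varepsilon''-2(n-1)\tk(d_0/2)\beta(\varphi_\varepsilon')\varphi_\varepsilon'.
$$
Comparing with $\partial_t Z_\varepsilon\ge 0$ and the strict version of (ii) for $\varphi_\varepsilon$ produces $2\varepsilon\le 0$, a contradiction.

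I expect the main obstacle to be selecting the sharp Jacobi-type variation $f$ so that the Ricci bound converts into precisely the $\tk(d_0/2)$ coefficient demanded by (ii); a coarser choice (for instance, the constant variation $f\equiv 1$) would produce a weaker inequality and destroy the sharpness needed for the eigenvalue application promised in the abstract. A secondary technical issue is making the cut-locus argument rigorous via smooth upper support functions. The Neumann case is handled by the standard observation that convexity of $\partial M$, together with $D_\nu u=0$ and the formula $Du=\varphi_\varepsilon'\,\gamma'$ derived above, rules out a first contact point on $\partial M$.
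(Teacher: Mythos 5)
Your proposal follows essentially the same route as the paper: the two-point function $Z$ with the $\varepsilon(1+t)$ perturbation, first- and second-order conditions at a first touching point, the second variation of arc length with transverse variation field $fE_i$ where $f''+\kappa f=0$ and $f=1$ at the endpoints (this is exactly the paper's choice $\ck(s)/\ck(d/2)$ and it does produce the sharp coefficient $-2(n-1)\tk(d_0/2)\varphi'$), the structure \eqref{eq:formofa} to assemble the operator at the two points, and the convexity/Neumann argument at the boundary; the paper merely packages the second-variation step as a separate viscosity comparison theorem (Theorem \ref{thm:dist-comp}), whose test-function formulation implements precisely your ``smooth upper support function'' fix at cut points. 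The only ingredient of the paper's argument you have not covered is the borderline case $\kappa>0$, $d(x_0,y_0)=\pi/\sqrt{\kappa}$, where $\ck(d_0/2)=0$, your boundary-value problem for $f$ has no solution and $\tk(d_0/2)$ is infinite; the paper excludes this configuration by running the same computation with $\kappa'<\kappa$ and letting $\kappa'\nearrow\kappa$ to contradict smoothness of the touching function.
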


Here we use the notation
\begin{equation}  \label{defn of ck}
\ck(\tau )=\begin{cases} \cos\sqrt{\kappa}\tau , & \kappa>0 \\ 
1, &\kappa =0 \\ 
\cosh \sqrt{-\kappa} \tau , & \kappa <0, 
\end{cases} 
\quad \text{ and } \quad 
\sk(\tau )=\begin{cases} \frac1{\sqrt{\kappa}}\sin\sqrt{\kappa}\tau , & \kappa>0 \\ 
\tau , &\kappa =0 \\ 
\frac1{\sqrt{-\kappa}}\sinh \sqrt{-\kappa} \tau , & \kappa <0,
\end{cases} \end{equation} 
and  $$
\tk(s) := \kappa\frac{\sk(s)}{\ck(s)}=\begin{cases}
\sqrt{\kappa}\tan\left(\sqrt{\kappa}s\right),&\kappa>0\\
0,& \kappa=0\\
-\sqrt{-\kappa}\tanh\left(\sqrt{-\kappa}s\right),&\kappa<0.
\end{cases}
$$

%

 These estimates are sharp, holding exactly for certain symmetric solutions on particular warped product spaces.  The modulus of continuity estimates also imply sharp gradient bounds which hold in the same situation.  The central ingredient in our argument is a comparison result for the second derivatives of the distance function (Theorem \ref{thm:dist-comp}) which is a close relative of the well-known Laplacian comparison theorem.  We remark that the assumption of smoothness can be weakened:  For example in the case of the $p$-laplacian heat flow we do not expect solutions to be smooth near spatial critical points, but nevertheless solutions are smooth at other points and this is sufficient for our argument.

As an immediate application of the modulus of continuity estimates, we provide a new proof of the optimal lower bound on the smallest positive eigenvalue of the Laplacian in terms of $D$ and $\kappa$:
Precisely, if we define 
$$
\lambda_1(M,g) = \inf\left\{\int_M |Du|_g^2\,d\text{\rm Vol}(g):\ \int_M u^2d\text{\rm Vol}(g)=1,\ \int_Mu\,d\text{\rm Vol}(g)=0\right\},
$$
and
$$
\lambda_1(D,\kappa,n) = 
\inf\left\{\lambda_1(M,g):\ \text{\rm dim}(M)=n,\ \text{\rm diam}(M)\leq D,\ \text{\rm Ric}\geq (n-1)\kappa g\right\},
$$
then we characterise $\lambda_1(D,\kappa)$ precisely as the first eigenvalue of a certain one-dimensional Sturm-Liouville problem:

\begin{theorem}[Lower bound on the first eigenvalue] \label{first eigenvalue estimate} 
Let $\mu$ be the first eigenvalue of the Sturm--Liouville problem 
\begin{gather} \label{SL equation}  \begin{split}
\frac1{\ck^{n-1}}\left(\Phi' \ck^{n-1}\right)' +\mu\Phi&=0 \text{ on }[-D/2,D/2],\\
\Phi'(\pm D/2 )&=0.   
\end{split}
\end{gather}
Then $\lambda_1(D,\kappa,n)=\mu$.
\end{theorem}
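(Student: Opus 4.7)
The plan is to prove the two inequalities $\lambda_1(D,\kappa,n)\ge\mu$ and $\lambda_1(D,\kappa,n)\le\mu$ separately; the first is the substantive part and is an immediate consequence of Theorem~\ref{thm:moc} applied to the heat equation (the case $\alpha=\beta=1$ in \eqref{eq:flow}).

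For the lower bound, fix an admissible $(M,g)$ with first Laplace eigenfunction $u$, so $\Delta u=-\lambda u$ with $\lambda=\lambda_1(M,g)$. Then $v(x,t):=e^{-\lambda t}u(x)$ solves the heat equation. As the comparison modulus I take
$$\varphi(s,t):=C\,e^{-\mu t}\,\Phi(s),$$
where $\Phi$ is the first non-constant eigenfunction of \eqref{SL equation}, chosen odd so that $\Phi(0)=0$ and $\Phi'>0$ on $(-D/2,D/2)$ (standard Sturm--Liouville theory). Expanding the weighted operator in \eqref{SL equation} and using $\ck'/\ck=-\tk$ turns the eigenvalue equation into $\Phi''-(n-1)\tk\,\Phi'=-\mu\Phi$, so $\varphi_t=-\mu\varphi=\varphi''-(n-1)\tk\,\varphi'$; hypothesis (ii) holds with equality and (iii) is immediate from $\Phi'\ge 0$. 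Since $u$ is smooth with $\Phi'(0)>0$ and $\Phi>0$ on $(0,D/2]$, a sufficiently large constant $C$ makes $C\Phi$ a smooth initial modulus of continuity for $u$, verifying (i). Theorem~\ref{thm:moc} then yields
$$|u(x)-u(y)|\le 2C\,e^{(\lambda-\mu)t}\,\Phi\!\left(\tfrac{d(x,y)}{2}\right)\quad\text{for every }t\ge 0.$$
If $\lambda<\mu$ the right side decays to zero, forcing $u$ constant and contradicting that $u$ is a nontrivial eigenfunction; hence $\lambda_1(M,g)\ge\mu$ for every admissible $(M,g)$, and so $\lambda_1(D,\kappa,n)\ge\mu$.

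For the upper bound, one must exhibit $n$-dimensional manifolds (at least in a limiting sense) with $\Ric\ge(n-1)\kappa g$, diameter $\le D$, and first eigenvalue approaching $\mu$. The natural candidate is the warped product $M_\epsilon=[-D/2,D/2]\times S^{n-1}$ with metric $dr^2+\epsilon^2\ck^2(r)\,g_{S^{n-1}}$: the warped-product Ricci formulas give $\Ric(\partial_r,\partial_r)=(n-1)\kappa$ and, for $\epsilon$ small, $\Ric(X,X)\ge(n-1)\kappa |X|^2$ on the fibre directions; the diameter equals $D$; and the pullback of $\Phi(r)$ is a Neumann eigenfunction of $\Delta_{M_\epsilon}$ with eigenvalue $\mu$. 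Closed models with the same asymptotic eigenvalue are produced by gluing small round caps (for $\kappa>0$) or by thin-tube/doubling constructions (for $\kappa\le 0$).

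I expect the main obstacle to be on the upper bound side: producing genuine admissible models that are closed (or that have convex boundary so Theorem~\ref{thm:moc} applies as stated) requires case-by-case constructions, especially for $\kappa\le 0$ where there is no compact constant-curvature model of diameter $D$. The lower bound, by contrast, falls out cleanly once one recognises $\varphi=C e^{-\mu t}\Phi$ as a one-dimensional heat solution for the warped equation in \ref{1deqn} and verifies that it dominates the initial data of $v$.
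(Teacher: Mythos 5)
Your proposal is correct and follows essentially the same route as the paper: the lower bound via Theorem \ref{thm:moc} applied to the heat flow $e^{-\lambda t}u$ of a first eigenfunction with comparison function $C e^{-\mu t}\Phi$ (the paper differs only in approximating $\Phi$ by strictly increasing shooting solutions $\Phi_\sigma$, $\sigma<\mu$, before passing to the limit, which also supplies the oddness/monotonicity of $\Phi$ that you attribute to standard Sturm--Liouville theory), and the upper bound via the same warped products $ds^2+a\,\ck^2\bar g$ with small caps attached for the closed case. The only slip is cosmetic: for $a>0$ the warped product has diameter slightly exceeding $D$, which the paper handles by letting $a\to 0$ and using continuity of $\mu$ in $D$.
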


   Previous results in this direction include the results derived from gradient estimates due to Li \cite{Li-ev} and Li and Yau \cite{LiYau}, with the sharp result for non-negative Ricci curvature first proved by Zhong and Yang \cite{ZY}.  The complete result as stated above is implicit in the results of Kr\"oger \cite{Kroeger}*{Theorem 2} and explicit in those of Bakry and Qian \cite{BakryQian}*{Theorem 14}, which are also based on gradient estimate methods.  Our contribution is the rather simple proof using the long-time behaviour of the heat equation (a method which was also central in our work on the fundamental gap conjecture \cite{AC3}, and which has also been employed successfully in \cite{Ni}) which seems considerably easier than the previously available arguments.  In particular the complications arising in previous works from possible asymmetry of the first eigenfunction are avoided in our argument.    A similar argument proving the sharp lower bound for $\lambda_1$ on a Bakry-Emery manifold may be found in \cite{Andrews-Ni}. 

The estimate in Theorem \ref{first eigenvalue estimate} is sharp (that is, we obtain an equality and not just an inequality), since for a given diameter $D$ and Ricci curvature bound $\kappa$, we can construct a sequence of manifolds satisfying these bounds on which the first eigenvalue approaches $\mu_1$ (see the remarks after Corollary 1 in \cite{Kroeger}).  We include a discussion of these examples in section \ref{sec:examples}, since the examples required for our purposes are a simpler subset of those constructed in \cite{Kroeger}.   We also include in section \ref{sec:Li} a discussion of the implications for a conjectured inequality of Li.

\section{A comparison theorem for the second derivatives of distance}\label{sec:dist-comp}

\begin{theorem}   \label{thm:dist-comp}
Let $(M,g)$ be a complete connected Riemannian manifold with a lower Ricci curvature bound ${\mathrm{Ric}}\geq (n-1)\kappa g$, and let $\varphi$ be a smooth function with $\varphi'\geq0$.   Then on $(M\times M)\setminus \{(x,x):\ x\in M\}$ the function $v(x,y)=2\varphi(d(x,y)/2)$ is a viscosity supersolution of 
$$
\mathcal{L}[\nabla^2 v,\nabla v]=2\left[\alpha(\varphi')\varphi''-(n-1)\tk\beta(\varphi')\varphi'\right]\big|_{d/2},
$$
where 
$$
\mathcal{L}[B,\omega] = \inf\left\{\trace(AB):\ 
\begin{aligned}
&A\in\text{\rm Sym}_2(T^*_{x,y}(M\times M))\\
&A\geq0\\
&A|_{T^*_xM}=a(\omega\big|_{T_xM})\\
&A|_{T^*_yM}=a(\omega\big|_{T_yM})
\end{aligned}\right\}
$$
for any $B\in\text{\rm Sym}_2(T_{x,y}(M\times M)$ and $\omega\in T^*_{(x,y)}(M\times M)$.
\end{theorem}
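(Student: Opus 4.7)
The statement says that $v$ is a viscosity supersolution, so I need to show: at every $(x_0,y_0)$ with $d_0:=d(x_0,y_0)>0$, whenever $\psi\in C^2$ satisfies $\psi\le v$ with equality at $(x_0,y_0)$, there exists an admissible $A\in\mathrm{Sym}_2(T^*_{(x_0,y_0)}(M\times M))$ -- that is, $A\ge 0$ with $A|_{T^*_xM}=a(\nabla\psi|_{T_xM})$ and $A|_{T^*_yM}=a(\nabla\psi|_{T_yM})$ -- satisfying $\trace(A\,\nabla^2\psi)\le 2[\alpha(\varphi')\varphi''-(n-1)\tk\beta(\varphi')\varphi']\big|_{d_0/2}$. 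My plan is to test $\psi$ against geodesic variations of some minimizing geodesic $\gamma$ from $x_0$ to $y_0$; this treats smooth and cut-locus points uniformly, needing only the existence of one minimizer.

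Fix a unit-speed minimizer $\gamma:[-s_0,s_0]\to M$ with $s_0=d_0/2$ and a parallel orthonormal frame $e_1=\gamma',e_2,\dots,e_n$ along $\gamma$, using it to identify $T_{x_0}M\simeq T_{y_0}M$. For $(X,Y)\in T_{x_0}M\oplus T_{y_0}M$ and a vector field $\hat V$ along $\gamma$ with $\hat V(-s_0)=X$ and $\hat V(s_0)=Y$, set $\gamma_t(s)=\exp_{\gamma(s)}(t\hat V(s))$; the curves $\alpha(t)=\gamma_t(-s_0)$ and $\beta(t)=\gamma_t(s_0)$ are $M$-geodesics, and $L(\gamma_t)\ge d(\alpha(t),\beta(t))$ with equality at $t=0$. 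Since $\varphi$ is nondecreasing, $\psi(\alpha(t),\beta(t))\le 2\varphi(L(\gamma_t)/2)$ with equality at $t=0$; differentiating twice and using that $\alpha,\beta$ are $M$-geodesics yields the key inequality
\[
\nabla^2\psi\bigl((X,Y),(X,Y)\bigr)\le\tfrac12\varphi''(s_0)(L'(0))^2+\varphi'(s_0)L''(0),
\]
while first-order agreement forces $\nabla\psi|_{x_0}=-\varphi'(s_0)e_1$ and $\nabla\psi|_{y_0}=+\varphi'(s_0)e_1$.

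I now construct $A$ explicitly. Let $a_0=\mathrm{diag}(\alpha(\varphi'(s_0)),\beta(\varphi'(s_0)),\dots,\beta(\varphi'(s_0)))$ in the chosen frame (so $a_0=a(\nabla\psi|_{T_xM})=a(\nabla\psi|_{T_yM})$), and let $J:T^*_{y_0}M\to T^*_{x_0}M$ flip the $e_1$-component and fix the perpendicular ones. Define $A((\omega^x,\omega^y),(\omega^x,\omega^y)):=a_0(\omega^x+J\omega^y,\omega^x+J\omega^y)$. This is manifestly $\ge 0$, and evaluating at $\omega^y=0$ (resp.\ $\omega^x=0$) confirms $A|_{T^*_xM}=a_0=A|_{T^*_yM}$ (using that $J$ is $a_0$-orthogonal). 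Spectrally $A$ is block-diagonal in the pairs $(e_i^x,e_i^y)$: a $2\alpha$-eigenline along $(e_1^x-e_1^y)/\sqrt2$ and a $2\beta$-eigenline along each $(e_i^x+e_i^y)/\sqrt2$ for $i\ge 2$, so
\[
\trace(A\,\nabla^2\psi)=\alpha\,\nabla^2\psi\bigl((e_1^x,-e_1^y)^{\otimes2}\bigr)+\beta\sum_{i\ge 2}\nabla^2\psi\bigl((e_i^x,e_i^y)^{\otimes2}\bigr).
\]

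I bound each summand via the key inequality with tailored $\hat V$. For the tangential term $(e_1^x,-e_1^y)$ take $\hat V_1(s)=-(s/s_0)e_1$, so $\gamma_t(s)=\gamma(s(1-t/s_0))$ and $L(\gamma_t)=2s_0-2t$; hence $L'(0)=-2$, $L''(0)=0$, giving $\alpha\,\nabla^2\psi((e_1^x,-e_1^y)^{\otimes2})\le 2\alpha\varphi''$. For each perpendicular term $(e_i^x,e_i^y)$ with $i\ge 2$, take $\hat V_i(s)=f(s)e_i$ with $f(s)=\ck(s)/\ck(s_0)$, satisfying $f(\pm s_0)=1$ and $f''+\kappa f=0$; then $L'(0)=0$ and the second-variation formula for arc length gives $L_i''(0)=\int_{-s_0}^{s_0}(|f'|^2-f^2 R(e_i,e_1,e_i,e_1))\,ds$. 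Summing over $i\ge 2$ and using $\Ric(\gamma',\gamma')\ge(n-1)\kappa$ yields $\sum_{i\ge 2}L_i''(0)\le(n-1)\int_{-s_0}^{s_0}(|f'|^2-\kappa f^2)\,ds$; integration by parts using $f''=-\kappa f$ collapses this to the boundary term $(n-1)[ff']_{-s_0}^{s_0}=-2(n-1)\tk(s_0)$. Combining,
\[
\trace(A\,\nabla^2\psi)\le 2\alpha\varphi''-2(n-1)\tk(s_0)\beta\varphi'=2[\alpha\varphi''-(n-1)\tk\beta\varphi']\big|_{s_0},
\]
which establishes the supersolution condition. The principal difficulty is selecting the correct $A$: the reflection $J$ encodes that parallel transport of $\nabla d|_{y_0}$ along $\gamma$ yields $-\nabla d|_{x_0}$, so the gradient direction lies in the null space of the symmetrization $\omega^x+J\omega^y$; this is exactly what forces the $\varphi''$ coefficient to come out as $2\alpha$ while the perpendicular directions contribute the Ricci-controlled $-2(n-1)\tk\beta\varphi'$. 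Naive symmetric choices like $B=0$ or $B=\pm a_0$ uniformly fail to produce both coefficients simultaneously.
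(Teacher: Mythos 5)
Your argument is, in substance, the paper's own proof: the admissible tensor you build from $a_0$ and the reflection $J$ is exactly the paper's choice $\alpha(\varphi')(e_1,-e_1)\otimes(e_1,-e_1)+\beta(\varphi')\sum_{i\ge2}(e_i,e_i)\otimes(e_i,e_i)$, the tangential bound comes from the same shrinking of the geodesic at both ends, and the transverse bound comes from the same $\ck$-weighted variation fields, the Ricci lower bound, and the same integration by parts yielding the boundary term $-2(n-1)\tk(d/2)$. Wherever $\ck(d(x_0,y_0)/2)\neq 0$ — i.e.\ always when $\kappa\le 0$, and when $\kappa>0$ with $d(x_0,y_0)<\pi/\sqrt{\kappa}$ — your computations are correct and complete (the first-variation justification of $\nabla\psi=\varphi'(s_0)(-e_1^*,e_1^*)$, which you state tersely, is indeed available from the setup you introduce).

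There is, however, a genuine gap at the extreme case $\kappa>0$ and $d(x_0,y_0)=\pi/\sqrt{\kappa}$, which the theorem does not exclude: Bonnet--Myers gives $d\le\pi/\sqrt{\kappa}$, and equality can occur (antipodal points on the round sphere). At such a pair $\ck(s_0)=\cos(\pi/2)=0$, so your transverse field $f=\ck(s)/\ck(s_0)$ is undefined and $\tk(s_0)$ is infinite; your construction produces nothing there, while the statement still asserts the supersolution property at that pair. The paper treats this case separately: having first reduced by approximation to $\varphi'>0$, it repeats your transverse computation with $\mathbf{C}_{\kappa'}$, $\kappa'<\kappa$, in place of $\ck$, obtaining $\sum_{i\ge2}\nabla^2\psi\bigl((e_i,e_i),(e_i,e_i)\bigr)\le-2(n-1)\varphi'\,\mathbf{T}_{\kappa'}(d/2)$ for every $\kappa'<\kappa$; letting $\kappa'$ increase to $\kappa$ the right-hand side tends to $-\infty$, contradicting the smoothness of $\psi$, so no admissible test function can touch $v$ from below at such a pair and the viscosity inequality holds vacuously. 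You need to add this limiting argument (note it is exactly here that the preliminary reduction to strictly positive $\varphi'$ is used, a reduction which also disposes of the degenerate evaluation of $a$ when $\varphi'(s_0)=0$) to cover the full range of the theorem.
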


\begin{proof}
By approximation it suffices to consider the case where $\varphi'$ is strictly positive.
Let $x$ and $y$ be fixed, with $y\neq x$ and $d=d(x,y)$, and let $\gamma:\ [-d/2,d/2]\to M$ be a minimizing geodesic from $x$ to $y$ (that is, with $\gamma(-d/2)=x$ and $\gamma(d/2)=y$) parametrized by arc length.  Choose an orthonormal basis $\{E_i\}_{1\leq i\leq n}$ for $T_xM$ with $E_n=\gamma'(-d/2)$.  Parallel transport along $\gamma$ to produce 
an orthonormal basis $\{E_i(s)\}_{1\leq i\leq n}$ for $T_{\gamma(s)}M$ with $E_n(s)=\gamma'(s)$ for each $s\in [-d/2,d/2]$.  Let $\{E_*^i\}_{1\leq i\leq n}$ be the dual basis for $T^*_{\gamma(s)}M$.  

To prove the theorem, consider any smooth function  $\psi$ defined on a neighbourhood of $(x,y)$ in $M\times M$ such that $\psi\leq v$ and $\psi(x,y)=v(x,y)$.    
We must prove that $\mathcal{L}[\nabla^2\psi,\nabla\psi]\big|_{(x,y)}\leq 2\left[\alpha(\varphi')\varphi''-(n-1)\beta(\varphi')\varphi'\tk\right]\big|_{d(x,y)/2}$. 
By definition of $\mathcal{L}$ it suffices to find a non-negative $A\in\text{\rm Sym}_2(T^*_{x,y}(M\times M))$ such that $A|_{T_xM}=a\left(\nabla\psi|_{T_xM}\right)$ and $A|_{T_yM}=a\left(\nabla\psi|_{T_yM}\right)$, with $\trace(AD^2\psi)\leq 2\left[\alpha(\varphi')\varphi''-(n-1)\beta(\varphi')\varphi'\tk\right]\big|_{d/2}$.   

Before choosing this we observe that $\nabla\psi$ is determined by $d$ and $\varphi$:  We have $\psi\leq 2\varphi\circ d/2$ with equality at $(x,y)$.  In particular we have (since $\varphi$ is nondecreasing)
$$
\psi(\gamma(s),\gamma(t))\leq 2\varphi(d(\gamma(s),\gamma(t))/2)\leq 2\varphi(L[\gamma|_{[s,t]}]/2)\leq 2\varphi(|t-s|/2),
$$
for all $s\neq t$, with equality when $t=d/2$ and $s=-d/2$.  This gives $\nabla\psi(E_n,0)=-\varphi'(d/2)$ and $\nabla\psi(0,E_n)=\varphi'(d/2)$.  To identify the remaining components of $\nabla\psi$, we define $\gamma^y_i(r,s) = \exp_{\gamma(s)}(r(1/2+s/d)E_i(s))$ for $1\leq i\leq n-1$.  Then we have
$$
\psi(x,\exp_y(rE_i))\leq 2\varphi(L[\gamma_i^y(r,.)]/2)
$$
with equality at $r=0$.  The right-hand side is a smooth function of $r$ with derivative zero, from which it follows that $\nabla\psi(0,E_i)=0$.   Similarly we have $\nabla\psi(E_i,0)=0$ for $i=1,\dots,n-1$.  Therefore we have
$$
\nabla\psi\big|_{(x,y)} = \varphi'(d(x,y)/2) (-E^n_*,E^n_*).
$$
In particular we have by \eqref{eq:formofa}
$$
a(\nabla\psi|_{T_xM}) = \alpha(\varphi')E_n\otimes E_n+\beta(\varphi')\sum_{i=1}^{n-1}E_i\otimes E_i,
$$
and similarly for $y$.

Now we choose $A$ as follows:
\begin{equation}   \label{choice of A}
A = \alpha(\varphi')(E_n,-E_n)\otimes(E_n,-E_n)+\beta(\varphi')\sum_{i=1}^{n-1}(E_i,E_i)\otimes(E_i,E_i).
\end{equation}  
This is manifestly non-negative, and agrees with $a$ on $T_xM$ and $T_yM$ as required.
This choice gives
\begin{equation}\label{eq:trace}
\trace(A\nabla^2\psi) = \alpha(\varphi)\nabla^2\psi\left((E_n,-E_n),(E_n,-E_n)\right)+\beta(\varphi')\sum_{i=1}^{n-1}\nabla^2\psi\left((E_i,E_i),(E_i,E_i)\right).
\end{equation}
For each $i\in\{1,\dots,n-1\}$ let
$\gamma_{i}:\ (-\varepsilon,\varepsilon)\times[-d/2,d/2]\to M$ be any smooth one-parameter family of curves with $\gamma_{i}(r,\pm d/2) = \exp_{\gamma(\pm d/2)}(rE_i(\pm d/2))$ for $i=1,\dots,n-1$, and $\gamma_i(0,s)=\gamma(s)$.  Then $d(\exp_x(rE_i),\exp_y(rE_i))\leq L[\gamma_i(r,.)]$ and hence
\begin{align*}
\psi(\exp_x(rE_i),\exp_y(rE_i))&\le v(\exp_x(rE_i),\exp_y(rE_i))\\
&=2\varphi\left(\frac{d(\exp_x(rE_i),\exp_y(rE_i))}{2}\right)\\
&\leq 2\varphi\left(\frac{L[\gamma_i(r,.)]}{2}\right)  
\end{align*}
since $\varphi$ is nondecreasing.  Since the functions on the left and the right are both smooth functions of $r$ and equality holds for $r=0$, it follows that
\begin{equation}\label{eq:D2i}
\nabla^2\psi((E_i,E_i),(E_i,E_i))\leq2\sum_{i=1}^{n}\frac{d^2}{dr^2}\left(\varphi\left(\frac{L[\gamma_{i}(r,.)]}{2}\right)\right)
\Big|_{r=0}.
\end{equation}
Similarly, since $d-2r=L[\gamma\big|_{[-d/2+r,d/2-r]}]\geq d(\gamma(-d/2+r),\gamma(d/2-r))$ we have  \begin{equation}\label{eq:D2n}
\nabla^2\psi(E_n,-E_n),(E_n,-E_n))\leq 2\frac{d^2}{dr^2}\left(\varphi\left(\frac{d}{2}-r\right)\right)\Big|_{r=0} = 2\varphi''\left(\frac{d}{2}\right).
\end{equation}
Now we make a careful choice of the curves $\gamma_i(r,.)$ motivated by the situation in the model space, in order to get a useful result on the right-hand side in the inequality \eqref{eq:D2i}:  
To begin with if $K>0$ then we assume that $d<\frac{\pi}{\sqrt{K}}$ (we will return to deal with this case later).
We choose
$$
\gamma_i(r,s) = \exp_{\gamma(s)}\left(\frac{r\ck(s)E_i}{\ck(d/2)}\right),
$$
where  $\ck$ is given by \eqref{defn of ck}.
Now we proceed to compute the right-hand side of \eqref{eq:D2i}:  Denoting $s$ derivatives of $\gamma_i$ by $\gamma'$ and $r$ derivatives by $\dot\gamma$, we find
\begin{align*}
\frac{d}{dr}\left(\frac{L[\gamma_{i}(r,.)]}{2}\right)
&=\frac{d}{dr}\left(
\int_{-d/2}^{d/2}\left\|\gamma'(r,s)\right\|\,ds\right)\\
&=\int_{-d/2}^{d/2}\frac{\left\langle \gamma',\nabla_r\gamma'\right\rangle}{\|\gamma'\|}\,ds.
\end{align*}
In particular this gives zero when $r=0$.
Differentiating again we obtain 
(using $\|\gamma'(0,s)\|=1$ and the expression $\dot\gamma(0,s)=\frac{\ck(s)}{\ck(d/2)}E_i$)
$$
\frac{d^2}{dr^2}\left(\frac{L[\gamma_{i}(r,.)]}{2}\right)\Big|_{r=0}
=\int_{-d/2}^{d/2}\|\nabla_r\gamma'\|^2-\left\langle\gamma',\nabla_r\gamma'\right\rangle^2+
\left\langle \gamma',\nabla_r\nabla_r\gamma'\right\rangle\,ds.
$$
Now we observe that $\nabla_r\gamma'=\nabla_s\dot\gamma = \nabla_s\left(\frac{\ck(s)}{\ck(d/2)}E_i\right) = \frac{\ck'(s)}{\ck(d/2)}E_i$, while 
$$
\nabla_r\nabla_r\gamma' = \nabla_r\nabla_s\dot\gamma 
=\nabla_s\nabla_r\dot\gamma -R(\dot\gamma,\gamma')\dot\gamma
=-\frac{\ck(s)^2}{\ck(d/2)^2} R(E_i,E_n)E_i,
$$
since by the definition of $\gamma_i(r,s)$ we have $\nabla_r\dot\gamma=0$.
This gives
$$
\frac{d^2}{dr^2}\left(\frac{L[\gamma_{i}(r,.)]}{2}\right)\Big|_{r=0}
=\frac{1}{\ck(d/2)^2}\int_{-d/2}^{d/2}\left\{\ck'(s)^2-\ck(s)^2 R(E_i,E_n,E_i,E_n)\right\}\,ds.
$$
Summing over $i$ from $1$ to $n-1$ gives
\begin{align*}
\left.\sum_{i=1}^{n-1}\frac{d^2}{dr^2}\left(\frac{L[\gamma_{i}(r,.)]}{2}\right)\right|_{r=0}
&=\frac{1}{\ck(d/2)^2}\int_{-d/2}^{d/2}\left\{(n-1)\ck'(s)^2 - \ck(s)^2\sum_{i=1}^{n-1}R(E_i,E_n,E_i,E_n)\right\}\,ds\\
&=\frac{1}{\ck(d/2)^2}\int_{-d/2}^{d/2}\left\{(n-1)\ck'(s)^2 - \ck(s)^2{\mathrm{Ric}}(E_n,E_n)\right\}\,ds\\
&\leq \frac{n-1}{\ck(d/2)^2}\int_{-d/2}^{d/2}\left\{\ck'(s)^2 - \kappa \ck(s)^2\right\}\,ds.
\end{align*}

In the case $\kappa=0$ the integral is zero; in the case $\kappa<0$, or the case $\kappa>0$ with $d<\frac{\pi}{\sqrt{\kappa}}$, we have
\begin{align*}
\frac{1}{\ck(d/2)^2}\int_{-d/2}^{d/2}\left\{\ck'(s)^2 - \kappa\ck(s)^2\right\}\,ds
&= \frac{1}{\ck(d/2)^2}\int_{-d/2}^{d/2} \left(-\kappa \sk \ck'- \kappa \sk' \ck    \right) \,ds\\
&=-\frac{\kappa}{\ck(d/2)^2}    \int_{-d/2}^{d/2} \left(\ck\sk  \right)' \,ds\\
&=-\frac{2\kappa \ck(d/2)\sk(d/2)      }{\ck(d/2)^2}  \\
&=-2\tk(d/2).
\end{align*}
Finally, we have
$$
\left.\frac{d}{dr}\left(\varphi\left(\frac{L[\gamma_{i}(r,.)]}{2}\right)\right)
\right|_{r=0} =\left. \varphi'\frac{d}{dr}\left(\frac{L[\gamma_{i}(r,.)]}{2}\right)
\right|_{r=0} = 0,
$$
and so
\begin{align*}
\sum_{i=1}^{n-1}\frac{d^2}{dr^2}\left(\varphi\left.\left(\frac{L[\gamma_{i}(r,.)]}{2}\right)\right)
\right|_{r=0} &= \sum_{i=1}^{n-1}\left.\left(\varphi'\frac{d^2}{dr^2}\left(\frac{L[\gamma_{i}(r,.)]}{2}\right)
\right|_{r=0} +\varphi''\left(\frac{d}{dr}\left.\left(\frac{L[\gamma_{i}(r,.)]}{2}\right)
\right|_{r=0}\right)^2\right)\\
&\leq -2(n-1)\left.\varphi'\tk\right|_{d/2}.
\end{align*}
Now using the inequalities \eqref{eq:D2i} and \eqref{eq:D2n}, we have from \eqref{eq:trace} that
\begin{equation}   \label{trace inequality}
\mathcal{L}[\nabla^2\psi,\nabla\psi]\leq{\mathrm{trace}}\left(A\nabla^2\psi\right)\leq 2\left[\alpha(\varphi')\varphi''-(n-1)\beta(\varphi')\varphi'\tk\right]\big|_{d/2},
\end{equation}
as required.

In the case $d=\frac{\pi}{\sqrt{K}}$ then we choose instead $\gamma_i(r,s) = \exp_{\gamma(s)}
\left(\frac{r \mathbf{C_{\kappa'}} (s)E_i}{\mathbf{C_{\kappa'}}(d/2)}\right)$, for arbitrary $\kappa'<\kappa$.  Then the computation above gives
$$
\sum_{i=1}^{n-1}\nabla^2\psi((E_i,E_i),(E_i,E_i))\leq -2(n-1)\varphi'\tk.
$$
Since the right hand side approaches $-\infty$ as $\kappa'$ increases to $\kappa$, we have a contradiction to the assumption that $\psi$ is smooth.  Hence no such $\psi$ exists and there is nothing to prove.
\end{proof}

\section{Estimate on the modulus of continuity for solutions of heat equations} \label{mfld without bdy section}
In this section we prove Theorem \ref{thm:moc}, which extends the oscillation estimate from domains in $\mathbb{R}^n$ to compact Riemannian manifolds.   The estimate is analogous to \cite{AC2}*{Theorem 4.1}, the modulus of continuity estimate for the Neumann problem on a convex Euclidean domain.
 
\begin{proof}[Proof of Theorem \ref{thm:moc}]
Recall that $(M,g)$ is a compact Riemannian manifold, possibly with boundary (in which case we assume that the boundary is locally convex).
Define an evolving quantity, $Z$, on the product manifold  ${M}\times{M}\times[0,\infty)$:
\[  
Z(x,y,t):=u(y,t)-u(x,t)-2\varphi(d(x,y)/2,t)-\epsilon(1+t)
\] 
for small $\epsilon>0$.  

We have assumed that  $\varphi$ is a modulus of continuity for $u$ at $t=0$, and so $Z(\cdot,\cdot,0)\leq -\epsilon<0$.  Note also that $Z$ is smooth on $M\times M\times[0,\infty)$, and $Z(x,x,t)=-\varepsilon(1+t)<0$ for each $x\in M$ and $t\in[0,T)$.  It follows that if $Z$ ever becomes positive, there exists a first time $t_0>0$ and points $x_0\neq y_0$ in $M$ such that $Z(x_0,y_0,t_0)=0$.   There are two possibilities:  Either both $x_0$ and $y_0$ are in the interior of $M$, or at least one of them (say $x_0$) lies in the boundary $\partial M$.

We deal with the first case first:
Clearly $Z(x,y,t)\leq 0$ for all $x,y\in M$ and $t\in[0,t_0]$.   In particular if we let $v(x,y)=2\varphi\left(\frac{d(x,y)}{2},t_0\right)$ and $\psi(x,y)=u(y,t)-u(x,t)-\varepsilon(1+t_0)$ then 
$$
\psi(x,y)\leq v(x,y)
$$
for all $x,y\in M$, while $\psi(x_0,y_0)=v(x_0,y_0)$.  Since $\psi$ is smooth, by Theorem \ref{thm:dist-comp} we have 
$$
{\mathcal L}[\nabla^2\psi,\nabla\psi]\leq 2\left[\alpha(\varphi')\varphi''-(n-1)\tk\beta(\varphi')\varphi'\right]\big|_{\frac{d(x_0,y_0)}2}.
$$
Now we observe that since the mixed partial derivatives of $\nabla^2\psi$ all vanish, we have for any admissible $A$ in the definition of $\mathcal L$ that
$$
\text{\rm tr}\left(A\nabla^2\psi\right) = \left(a(Du)^{ij}\nabla_i\nabla_j u\right)\big|_{(y_0,t_0)}-\left(a(Du)^{ij}\nabla_i\nabla_ju\right)\big|_{(x_0,t_0)},
$$
and therefore
$$
{\mathcal L}[\nabla^2\psi,\nabla\psi] = \left(a(Du)^{ij}\nabla_i\nabla_j u\right)\big|_{(y_0,t_0)}-\left(a(Du)^{ij}\nabla_i\nabla_ju\right)\big|_{(x_0,t_0)}.
$$
It follows that
\begin{equation}\label{secvar}
a(Du)^{ij}\nabla_i\!\nabla_j u\big|_{(y_0,t_0\!)}\!\!\!-\!a(Du)^{ij}\nabla_i\!\nabla_ju\big|_{(x_0,t_0\!)}\!\leq 2\!\left[\!\alpha(\!\varphi')\varphi''\!\!-\!(\!n\!-\!1\!)\!\tk\beta(\varphi')\varphi'\right]\!\!\big|_{{d(x_0,y_0)}/2}.
\end{equation}
We also know that the time derivative of $Z$ is non-negative at $(x_0,y_0,t_0)$, since $Z(x_0,y_0,t)\leq 0$ for $t<t_0$:
\begin{equation}\label{tvar}
\frac{\partial Z}{\partial t}\big|_{(x_0,y_0,t_0)}=a(Du)^{ij}\nabla_i\nabla_j u\big|_{(y_0,t_0)}-\left.a(Du)^{ij}\nabla_i\nabla_j u\right|_{(x_0,t_0)}-2\frac{\partial\varphi}{\partial t}-\varepsilon\geq 0.
\end{equation}
Combining the inequalities \eqref{secvar} and \eqref{tvar} we obtain
$$
\frac{\partial\varphi}{\partial t}<\alpha(\varphi')\varphi''-(n-1)\tk\beta(\varphi')\varphi'
$$
where all terms are evaluated at the point $d(x_0,y_0)/2$.  This contradicts the assumption \ref{1deqn} in Theorem \ref{thm:moc}.  

Now we consider the second case, where $x_0\in\partial M$.  
Under this assumption that $\partial M$ is convex there exists \cite{BGS} a length-minimizing geodesic $\gamma:\ [0,d]\to M$ from $x_0$ to $y_0$, such that $\gamma(s)$ is in the interior of $M$ for $0<s<d$ and $\gamma'(0)\cdot \nu(x_0)>0$, where $\nu(x_0)$ is the inward-pointing unit normal to $\partial M$ at $x_0$.  We compute
$$
\frac{d}{ds}Z(\exp_{x_0}(s\nu(x_0)),y_0,t_0) = -\nabla_{\nu(x_0)}u-\varphi'(d/2)\nabla d(\nu(x_0),0)
=\varphi'(d/2)\gamma'(0)\cdot \nu(x_0)\geq 0.
$$
In particular $Z(\exp_{x_0}(s\nu(x_0)),y_0,t_0)>0$ for all small positive $s$, contradicting the fact that $Z(x,y,t_0)\leq 0$ for all $x,y\in M$.

Therefore $Z$ remains negative for all $(x,y)\in M$ and $t\in[0,T)$.  Letting $\varepsilon$ approach zero proves the theorem.
\end{proof}

\section{The eigenvalue lower bound}

Now we provide the proof of the sharp lower bound on the first eigenvalue (Theorem \ref{first eigenvalue estimate}), which follows very easily from the modulus of continuity estimate from Theorem \ref{thm:moc}.

\begin{proposition}  \label{mu as bound} For $M$ and $u$ as in Theorem 1 applied to the heat equation ($\alpha\equiv\beta\equiv1$ in \eqref{eq:formofa}), we have the oscillation estimate
$$|u(y,t)-u(x,t)|\le C\E^{-\mu t},$$
where $C$ depends on the modulus of continuity of $u(\cdot,0)$, and  $\mu$ is the smallest positive eigenvalue of the Sturm-Liouville equation 
\begin{gather} \label{SL equation}  \begin{split}
\Phi''-(n-1)\tk \Phi' +\mu\Phi=\frac1{\ck^{n-1}}\left(\Phi' \ck^{n-1}\right)' +\mu \Phi&=0 \text{ on }[-D/2,D/2],\\
\Phi'(\pm D/2 )&=0.   
\end{split}
\end{gather}
\end{proposition}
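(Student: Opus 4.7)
The plan is to feed into Theorem \ref{thm:moc} (specialized to the heat equation, where $\alpha\equiv\beta\equiv 1$) a one-parameter family of test moduli built from the first eigenfunction of the one-dimensional Sturm--Liouville problem \eqref{SL equation}. Let $\Phi:[-D/2,D/2]\to\mathbb{R}$ denote an eigenfunction for the smallest positive eigenvalue $\mu$. Standard Sturm--Liouville theory, combined with the evenness of the weight $\ck^{n-1}$, tells us that $\Phi$ may be chosen odd about $0$, with $\Phi(0)=0$, $\Phi'>0$ on $(-D/2,D/2)$ (in particular $\Phi'(0)>0$), and $\Phi'(\pm D/2)=0$. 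I would then set
\[
\varphi(z,t) := C\,\E^{-\mu t}\,\Phi(z),\qquad z\in[0,D/2],\; t\ge 0,
\]
for a constant $C>0$ to be chosen, and show that $\varphi$ is an admissible modulus in Theorem \ref{thm:moc}.

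The conditions (ii) and (iii) are essentially built in. Since $\Phi'>0$ on $[0,D/2]$ we have $\varphi'=C\E^{-\mu t}\Phi'\ge 0$, giving (iii). For (ii), differentiate in $t$ and in $z$:
\[
\partial_t\varphi = -\mu\,\varphi,\qquad \varphi''-(n-1)\tk\varphi' = C\E^{-\mu t}\bigl(\Phi''-(n-1)\tk\Phi'\bigr).
\]
The Sturm--Liouville equation \eqref{SL equation}, rewritten in the non-divergence form $\Phi''-(n-1)\tk\Phi'=-\mu\Phi$, yields $\varphi''-(n-1)\tk\varphi'=-\mu\varphi$, so condition \ref{1deqn} (with $\alpha\equiv\beta\equiv 1$) holds as an equality.

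For the initial condition (i), I need $\varphi(\cdot,0)=C\Phi$ to dominate a modulus of continuity of $u(\cdot,0)$. Since $u(\cdot,0)$ is smooth on a compact manifold, $K:=\sup_M|Du(\cdot,0)|<\infty$ and so $\varphi_0(z):=Kz$ is a (Lipschitz) modulus of continuity for $u(\cdot,0)$. The quotient $\Phi(z)/z$ is smooth on $(0,D/2]$ and extends to $\Phi'(0)>0$ at $0$, so it is bounded below by some $c_0>0$ on $[0,D/2]$. Choosing $C\ge K/c_0$ ensures $C\Phi(z)\ge Kz\ge \tfrac12|u(x,0)-u(y,0)|$ whenever $d(x,y)/2=z$, so $\varphi(\cdot,0)$ is a valid initial modulus. (One may first smooth $Kz$ slightly to get a strictly increasing smooth dominating modulus; this is the one minor technical nuisance, not a real obstacle.)

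Applying Theorem \ref{thm:moc} then gives
\[
|u(x,t)-u(y,t)|\le 2\varphi(d(x,y)/2,t)=2C\E^{-\mu t}\,\Phi\bigl(d(x,y)/2\bigr)\le 2C\,\Phi(D/2)\,\E^{-\mu t},
\]
which is the claimed bound with $\widetilde C = 2C\Phi(D/2)$. The only step requiring any care is the choice of $C$ and the verification that $\Phi(z)/z$ is bounded below on $(0,D/2]$; once the eigenfunction's qualitative features (odd, monotone on $[0,D/2]$, nondegenerate at $0$) are in hand, the rest is a direct substitution into the modulus of continuity estimate.
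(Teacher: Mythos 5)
Your proposal is correct and is essentially the paper's own argument: plug $\varphi(z,t)=C\E^{-\mu t}\Phi(z)$ into Theorem \ref{thm:moc}, using the Lipschitz bound on $u(\cdot,0)$ and the nondegeneracy $\Phi'(0)>0$ to choose $C$, and then bound $2\varphi$ by $2C\Phi(D/2)\E^{-\mu t}$. The only (harmless) difference is that the paper constructs $(\Phi,\mu)$ by a shooting argument and applies the theorem to $C\Phi_\sigma\E^{-\sigma t}$ for $\sigma<\mu$ (where $\Phi_\sigma'>0$ strictly) before letting $\sigma\to\mu$, whereas you apply it directly to the eigenfunction $\Phi_\mu$, which is legitimate since Theorem \ref{thm:moc} only requires $\varphi'\ge 0$ and you import the standard Sturm--Liouville facts (simplicity, oddness, monotonicity) instead of deriving them.
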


\begin{proof}
The eigenfunction-eigenvalue pair $(\Phi,\mu)$ is defined as follows:  For any $\sigma\in\RR$ we define
$\Phi_\sigma(x)$ to be the solution of the initial value problem
\begin{align*}\Phi_\sigma''-(n-1)\tk\Phi_\sigma'+\sigma\Phi_\sigma&=0;\\
\Phi_\sigma(0)&=0;\\
\Phi'_\sigma(0)&=1.
\end{align*}
Then $\mu=\sup\{\sigma:\ x\in[-D/2,D/2]\Longrightarrow \Phi'_\sigma(x)>0\}$.  In particular, for $\sigma<\mu$ the function $\Phi_\sigma$ is strictly increasing on $[-D/2,D/2]$, and $\Phi_\sigma(x)$ is decreasing in $\sigma$ and converges smoothly to $\Phi(x)=\Phi_\mu(x)$ as $\sigma$ approaches $\mu$ for $x\in(0,D/2]$ and $0<\sigma<\mu$.  

Now we apply Theorem \ref{thm:moc}:  Since $\Phi$ is smooth, has positive derivative at $x=0$ and is positive for $x\in(0,D/2]$, there exists $C>0$ such that $C\Phi$ is a modulus of continuity for $u(.,0)$.  Then for each $\sigma\in(0,\mu)$, $\varphi_0=C\Phi_\sigma$ is also a modulus of continuity for $u(.,0)$, with $\varphi_0(0)=0$ and $\varphi'_0>0$.   Defining $\varphi(x,t) = C\Phi_\sigma(x)\E^{-\sigma t}$, all the conditions of Theorem  \ref{thm:moc} are satisfied, and we deduce that $\varphi(.,t)$ is a modulus of continuity for $u(.,t)$ for each $t\geq 0$.  Letting $\sigma$ approach $\mu$, we deduce that $C\Phi\E^{-\mu t}$ is also a modulus of continuity.  That is, for all $x,y$ and $t\geq 0$
$$
\left|u(y,t)-u(x,t)\right|\leq C\E^{-\mu t}\Phi\left(\frac{d(x,y)}{2}\right)\leq C\sup\Phi\ \E^{-\mu t}.
$$
\end{proof}

\paragraph{\textbf{Proof of Theorem \ref{first eigenvalue estimate}.}}    Observe that if $(\varphi,\lambda)$ is the first eigenfunction-eigenvalue pair, then $u(x,t)=e^{-\lambda t}\varphi(x)$ satisfies the heat equation on $M$ for all $t>0$.   From Proposition \ref{mu as bound}, we have 
$|u(y,t)-u(x,t)|\le Ce^{-\mu t}$ and so $|\varphi(y)-\varphi(x)|\le Ce^{-(\mu-\lambda)t}$ for all $x,y\in M$ and $t>0$.       Since $\varphi$ is non-constant, letting $t\rightarrow\infty$ implies that $\mu-\lambda\le 0$.    \qed

\smallskip
\section{Sharpness of the estimates}\label{sec:examples}

In the previous section we proved that $\lambda_1(D,\kappa,n)\geq \mu$.  To complete the proof of Theorem \ref{first eigenvalue estimate} we must prove that $\lambda_1(D,\kappa,n)\leq \mu$.  To do this we construct examples of Riemannian manifolds with given diameter bounds and Ricci curvature lower bounds such that the first eigenvalue is as close as desired to $\mu$.  The construction is similar to that given in \cite{Kroeger} and \cite{BakryQian}, but we include it here because the construction also produces examples proving that the modulus of continuity estimates of Theorem \ref{thm:moc} are sharp.

Fix $\kappa$ and $D$, and let $M=S^{n-1}\times [-D/2,D/2]$ with the metric
$$
g = ds^2 + a\ck^2(s)\bar g
$$
where $\bar g$ is the standard metric on $S^{n-1}$, and $a>0$.  The Ricci curvatures of this metric are given by
\begin{align*}
\text{\rm Ric}(\partial_s,\partial_s)&=(n-1)\kappa;\\
\text{\rm Ric}(\partial_s,v)&=0\quad\text{for\ }v\in TS^{n-1};\\
\text{\rm Ric}(v,v)&=\left((n-1)\kappa + (n-2)\frac{\frac{1}{a}-\kappa}{\ck^2}\right)|v|^2\quad\text{for\ }v\in TS^{n-1}.
\end{align*}
In particular the lower Ricci curvature bound $\text{\rm Ric}\geq (n-1)\kappa$ is satisfied for any $a$ if $\kappa\leq 0$ and for $a\leq 1/\kappa$ if $\kappa>0$.

To demonstrate the sharpness of the modulus of continuity estimate in Theorem \ref{thm:moc}, we construct solutions of equation \eqref{eq:flow} on $M$ which satisfy the conditions of the Theorem, and satisfy the conclusion with equality for positive times:  Let $\varphi_0:\ [0,D/2]$ be as given in the Theorem, and extend by odd reflection to $[-D/2,D/2]$ and define $\varphi$ to be the solution of the initial-boundary value problem
\begin{align*}
\frac{\partial\varphi}{\partial t} &= \alpha(\varphi')\varphi''+(n-1)\tk\beta(\varphi')\varphi';\\
\varphi(x,0)&=\varphi_0(x);\\
\varphi'(\pm D/2,t)&= 0.
\end{align*}
Now define $u(z,s,t) = \varphi(s,t)$ for $s\in[-D/2,D/2]$, $z\in S^{n-1}$, and $t\geq 0$.  Then a direct calculation shows that $u$ is a solution of equation \eqref{eq:flow} on $M$.  
If $\varphi_0$ is concave on $[0,D/2]$, then we have $|\varphi_0(a)-\varphi_0(b)|\leq 2\varphi_0\left(\frac{|b-1|}{2}\right)$ for all $a$ and $b$ in $[-D/2,D/2]$.  For our choice of $\varphi$ this also remains true for positive times.   Note also that for any $w,z\in S^{n-1}$ and $a,b\in[-D/2,D/2]$ we have
$d((w,a),(z,b))\geq |b-a|$.  Therefore we have
$$
|u(w,a,t)-u(z,b,t)|=|\varphi(a,t)-\varphi(b,t)|\leq 2\varphi\left(\frac{|b-a|}{2},t\right)\leq 
2\varphi\left(\frac{d((w,a),(z,b))}{2},t\right),
$$
so that $\varphi(.,t)$ is a modulus of continuity for $u(.,t)$ as claimed.  Furthermore, this holds with equality whenever $w=z$ and $b=-a$, so there is no smaller modulus of continuity and the estimate is sharp.

Now we proceed to the sharpness of the eigenvalue estimate.   On the manifold constructed above we have an explicit eigenfunction of the Laplacian, given by $\varphi(z,s) = \Phi(s)$ where $\Phi$ is the first eigenfunction of the one-dimensional Sturm-Liouville problem given in Proposition \ref{mu as bound}.
That is, we have $\lambda_1(M,g)\leq \mu$.  In this example we have the required Ricci curvature lower bound, and the diameter approaches $D$ as $a\to 0$.  Since $\mu$ depends continuously on $D$, the result follows.

A slightly more involved construction shows that examples of compact manifolds without boundary can also be constructed showing that the eigenvalue bound is sharp even in the smaller class of manifolds without boundary.  This is achieved by smoothing attaching a small spherical region at the ends of the above examples (see the similar construction in \cite{Andrews-Ni}*{Section 2}).

\section{Implications for the `Li conjecture'}\label{sec:Li}

In this section we mention some implications of the sharp eigenvalue estimate and a conjecture attributed to Peter Li:  The result of Lichnerowicz \cite{Lich} is that $\lambda_1\geq n\kappa$ whenever $\text{\rm Ric}\geq (n-1)\kappa g_{ij}$ (so that, by the Bonnet-Myers estimate, $D\leq \frac{\pi}{\sqrt{\kappa}}$).  The Zhong-Yang estimate \cite{ZY} gives $\lambda_1\geq \frac{\pi^2}{D^2}$ for $\text{\rm Ric}\geq 0$.  Both of these are sharp, and the latter estimate should also be sharp as $D\to 0$ for any lower Ricci curvature bound.
Interpolating linearly (in $\kappa$) between these estimates we obtain Li's conjecture
$$
\lambda_1\geq \frac{\pi^2}{D^2}+(n-1)\kappa.
$$
By construction this holds precisely at the endpoints $\kappa\to 0$ and $\kappa\to\frac{\pi^2}{D^2}$.

Several previous attempts to prove such inequalities have been made, particularly towards proving inequalities of the form $\lambda_1\geq \frac{\pi^2}{D^2}+a\kappa$ for some constant $a$, which are linear in $\kappa$ and have the correct limit as $\kappa\to 0$.  These include works of  DaGang Yang \cite{yang-eigenvalue}, Jun Ling \cite{ling-eigenvalues-2007} and Ling and Lu \cite{ ling-lu-eigenvalues}, the latter showing that $\alpha=\frac{34}{100}$ holds.    These are all superseded by the result of Shi and Zhang \cite{SZ} which proves $\lambda_1\geq \sup_{s\in(0,1)}\left\{4s(1-s)\frac{\pi^2}{D^2}+(n-1)s\kappa\right\}$, so in particular $\lambda_1\geq \frac{\pi^2}{D^2}+\frac{n-1}{2}\kappa$ by taking $s=\frac12$.

We remark here that the inequality with $a=\frac{n-1}{2}$ is the best possible of this kind, and in particular the Li conjecture is false.  This can be seen by computing an asymptotic expansion for the sharp lower bound $\mu$ given by Theorem \ref{first eigenvalue estimate}:  For fixed $D=\pi$ we perturb about $\kappa=0$ (as in \cite{Andrews-Ni}*{Section 4}), obtaining 
$$
\mu = 1+\frac{(n-1)}{2}\kappa+O(\kappa^2).
$$
By scaling this amounts to the estimate
$$
\mu = \frac{\pi^2}{D^2}+\frac{(n-1)}{2}\kappa+O(\kappa D^2).
$$
Since the lower bound $\lambda_1\geq \mu$ is sharp, this shows that the inequality $\lambda_1\geq \frac{\pi^2}{D^2}+a\kappa$ is false for any $a>\frac{(n-1)}{2}$, and in particular for $a=n-1$.

\begin{bibdiv}
\begin{biblist}

\bib{AC1}{article}{
   author={Andrews, Ben},
   author={Clutterbuck, Julie},
   title={Lipschitz bounds for solutions of quasilinear parabolic equations
   in one space variable},
   journal={J. Differential Equations},
   volume={246},
   date={2009},
   number={11},
   pages={4268--4283}
   }

\bib{AC2}{article}{
   author={Andrews, Ben},
   author={Clutterbuck, Julie},
   title={Time-interior gradient estimates for quasilinear parabolic
   equations},
   journal={Indiana Univ. Math. J.},
   volume={58},
   date={2009},
   number={1},
   pages={351--380},
  }

\bib{AC3}{article}{
   author={Andrews, Ben},
   author={Clutterbuck, Julie},
   title={Proof of the fundamental gap conjecture},
   journal={J. Amer. Math. Soc.},
   volume={24},
   date={2011},
   number={3},
   pages={899--916},
   }
   
\bib{Andrews-Ni}{article}{
	author={Andrews, Ben},
	author={Ni, Lei},
	title={Eigenvalue comparison on Bakry-Emery manifolds},
	eprint={http://arxiv.org/abs/1111.4967}
	}
	
\bib{BakryQian}{article}{
   author={Bakry, Dominique},
   author={Qian, Zhongmin},
   title={Some new results on eigenvectors via dimension, diameter, and
   Ricci curvature},
   journal={Adv. Math.},
   volume={155},
   date={2000},
   number={1},
   pages={98--153},
  }
  
\bib{BGS}{article}{
   author={Bartolo, Rossella},
   author={Germinario, Anna},
   author={S{\'a}nchez, Miguel},
   title={Convexity of domains of Riemannian manifolds},
   journal={Ann. Global Anal. Geom.},
   volume={21},
   date={2002},
   number={1},
   pages={63--83},
  }

\bib{Kroeger}{article}{
   author={Kr{\"o}ger, Pawel},
   title={On the ranges of eigenfunctions on compact manifolds},
   journal={Bull. London Math. Soc.},
   volume={30},
   date={1998},
   number={6},
   pages={651--655},
  }
  
\bib{Li-ev}{article}{label={Li},
   author={Li, Peter},
   title={A lower bound for the first eigenvalue of the Laplacian on a
   compact manifold},
   journal={Indiana Univ. Math. J.},
   volume={28},
   date={1979},
   number={6},
   pages={1013--1019},
  }
  
\bib{LiYau}{article}{
   author={Li, Peter},
   author={Yau, Shing Tung},
   title={Estimates of eigenvalues of a compact Riemannian manifold},
   conference={
      title={Geometry of the Laplace operator (Proc. Sympos. Pure Math.,
      Univ. Hawaii, Honolulu, Hawaii, 1979)},
   },
   book={
      series={Proc. Sympos. Pure Math., XXXVI},
      publisher={Amer. Math. Soc.},
      place={Providence, R.I.},
   },
   date={1980},
   pages={205--239},
}

\bib{Lich}{book}{
   author={Lichnerowicz, Andr{\'e}},
   title={G\'eom\'etrie des groupes de transformations},
   publisher={Travaux et Recherches Math\'ematiques, III. Dunod, Paris},
   date={1958},
   }

\bib{ling-eigenvalues-2007}{article}{author={Ling, Jun},
   title={The first eigenvalue of a closed manifold with positive Ricci
   curvature},
   journal={Proc. Amer. Math. Soc.},
   volume={134},
   date={2006},
   number={10},
   pages={3071--3079},}

\bib{ling-lu-eigenvalues}{article}{author={Ling, Jun},
   author={Lu, Zhiqin},
   title={Bounds of eigenvalues on Riemannian manifolds},
   conference={
      title={Trends in partial differential equations},
   },
   book={
      series={Adv. Lect. Math. (ALM)},
      volume={10},
      publisher={Int. Press, Somerville, MA},
   },
   date={2010},
   pages={241--264},
 }

\bib{Ni}{article}{
	author={Ni, Lei},
	title={Estimates on the modulus of expansion for vector fields solving nonlinear equations},
	eprint={http://arxiv.org/abs/1107.2351}
	}
	
\bib{SZ}{article}{
   author={Shi, Yu Min},
   author={Zhang, Hui Chun},
   title={Lower bounds for the first eigenvalue on compact manifolds},
   language={Chinese, with English and Chinese summaries},
   journal={Chinese Ann. Math. Ser. A},
   volume={28},
   date={2007},
   number={6},
   pages={863--866},
  }

\bib{yang-eigenvalue}{article}{
   author={Yang, DaGang},
   title={Lower bound estimates of the first eigenvalue for compact
   manifolds with positive Ricci curvature},
   journal={Pacific J. Math.},
   volume={190},
   date={1999},
   number={2},
   pages={383--398},
   }

\bib{ZY}{article}{
   author={Zhong, Jia Qing},
   author={Yang, Hong Cang},
   title={On the estimate of the first eigenvalue of a compact Riemannian
   manifold},
   journal={Sci. Sinica Ser. A},
   volume={27},
   date={1984},
   number={12},
   pages={1265--1273},
  }

\end{biblist}
\end{bibdiv}
 
\end{document}